\documentclass[a4paper,oneside,reqno]{amsart}
\usepackage[utf8]{inputenc}

\usepackage{amssymb,amsmath,amsthm}
\usepackage{mathtools}
\usepackage{bbm}
\usepackage{hyperref}
\usepackage[capitalize]{cleveref}
\usepackage{stmaryrd}

\usepackage{geometry}
\newtheorem{theorem}{Theorem}[section]
\newtheorem{lemma}[theorem]{Lemma}

\newcommand{\br}[1]{\llbracket{#1}\rrbracket}

\renewcommand{\le}{\leqslant}
\renewcommand{\ge}{\geqslant}

\renewcommand{\Pr}{\mathbb{P}}
\renewcommand{\emptyset}{\varnothing}

\newcommand{\cG}{\mathcal{G}}
\newcommand{\cH}{\mathcal{H}}
\newcommand{\cL}{\mathcal{L}}

\newcommand{\bR}{\mathbf{R}}
\newcommand{\eps}{\varepsilon}
\newcommand{\1}{\mathbbm{1}}

\DeclareMathOperator{\Ex}{\mathbb{E}}

\DeclareMathOperator{\Bin}{Bin}

\author{Ilay Hoshen}
\address{School of Mathematical Sciences, Tel Aviv University, Tel Aviv 6997801, Israel}
\email{ilayhoshen@gmail.com}

\author{Wojciech Samotij}
\address{School of Mathematical Sciences, Tel Aviv University, Tel Aviv 6997801, Israel}
\email{samotij@tauex.tau.ac.il}

\thanks{This research was supported by the Israel Science Foundation grant 2110/22 and the ERC Consolidator Grant 101044123 (RandomHypGra).}

\title[{Optimal lower bounds for epsilon-nets}]{Optimal lower bounds for epsilon-nets \\ for lines in the plane}
\date{\today}

\begin{document}

\begin{abstract}
  We prove that, for arbitrarily small positive $\varepsilon$, there is a finite planar point set~$P$ such that every $\varepsilon$-net for the
range space induced on $P$ by straight lines has cardinality $\Omega\bigl(1/\varepsilon \cdot \log(1/\varepsilon)\bigr)$.
  This matches the classical upper bound for range spaces with bounded VC-dimension due to Haussler and Welzl and confirms a prediction of Alon.
\end{abstract}

\maketitle

\section{Introduction}
\label{sec:introduction}

Suppose that $\mathcal{H}$ is a hypergraph (often called a \emph{range space} in our context) on a finite set~$V$ and let $\varepsilon$ be a positive real.
An \emph{$\varepsilon$-net} for $\mathcal{H}$ is a set $S \subseteq V$ that intersects every $A \in \mathcal{H}$ with $|A|\ge \varepsilon|V|$.
A fundamental question is how small an $\varepsilon$-net one can always find under various assumptions on the complexity of $\mathcal{H}$.
One well-known measure of complexity is the \emph{Vapnik--Chervonenkis dimension}, introduced in~\cite{CheVap1971}.
We say that a set $S\subseteq V$ is \emph{shattered} by $\mathcal{H}$ if the family $\{A \cap S : A \in \mathcal{H}\}$ comprises all $2^{|S|}$ subsets of $S$.
The \emph{VC-dimension} of $\mathcal{H}$ is the largest cardinality of a shattered set.
The celebrated theorem of Haussler and Welzl~\cite{HauWel1987} states that every hypergraph of VC-dimension at most $d$ admits an $\eps$-net of cardinality $O\bigl(d/\varepsilon \cdot \log(1/\varepsilon)\bigr)$.
Pach and Woeginger~\cite{PacWoe1990} subsequently showed that this bound is best possible up to a multiplicative constant implicit in the $O(\cdot)$ notation.
Rather tight two-sided bounds on the optimal constant, for every fixed $d \ge 2$ and $\varepsilon \to 0$, were later established by Komlós, Pach, and Woeginger~\cite{KomPacWoe1992}.

The constructions witnessing the lower bounds in~\cite{KomPacWoe1992,PacWoe1990} were purely combinatorial, and for a long time it was believed that `natural' geometrically defined range spaces should admit smaller $\varepsilon$-nets.
This belief was supported by upper bounds of order $1/\varepsilon$ proved for a number of basic two-dimensional range spaces, notably half-planes and disks~\cite{MatSeiWel1990}; see also the discussion in~\cite{PacTar2013}.
Other geometric families exhibit intermediate behaviour.
For axis-parallel rectangles in the plane and for axis-parallel boxes in $\mathbb{R}^3$, Aronov, Ezra, and Sharir~\cite{AroEzrSha2010} proved an upper bound of order $1/\varepsilon \cdot \log \log(1/\varepsilon)$.
It thus came as a surprise when Pach and Tardos~\cite{PacTar2013} found several geometric range spaces of VC-dimension two that require $\varepsilon$-nets of size $\Omega\bigl(1/\varepsilon \cdot \log(1/\varepsilon)\bigr)$; one such example is given by axis-parallel boxes in $\mathbb{R}^4$ having a vertex at the origin.

For the particularly elementary range space of straight lines in the plane, however, the optimal order of the minimum size of an $\varepsilon$-net remained unknown.
Given a finite set $P\subseteq\mathbb{R}^2$, define
\[
  \cL(P)\coloneqq\{P\cap \ell:\ell \text{ is a line in } \mathbb{R}^2\};
\]
clearly, $\mathcal{L}(P)$ has VC-dimension at most two.
Settling a problem raised by Matou{\v{s}}ek, Seidel, and Welzl~\cite{MatSeiWel1990}, Alon~\cite{Alo2012} constructed planar point sets $P$ for which every $\eps$-net for $\mathcal{L}(P)$ has size $\omega(1/\varepsilon)$.
Alon's elegant argument is based on the density Hales--Jewett theorem of Furstenberg and Katznelson~\cite{FurKat1991}, and the function implicit in the $\omega(\cdot)$ notation therefore tends to infinity extremely slowly.
Notwithstanding, Alon suggested that there should in fact be planar point sets requiring $\varepsilon$-nets of size $\Omega\bigl(1/\eps \cdot \log(1/\eps)\bigr)$.
A substantial breakthrough was achieved by Balogh and Solymosi~\cite{BalSol2018}, who used the hypergraph container method~\cite{BalMorSam15,BalMorSam18,SaxTho15} to prove that there are planar point sets requiring $\eps$-nets of size at least $1/\varepsilon \cdot (\log(1/\varepsilon))^{1/3-o(1)}$.
Subsequently, Balogh and the second author~\cite{BalSam2020} improved the $1/3$ in the exponent to $1/2$ by revisiting the argument of~\cite{BalSol2018}, armed with an efficient version of the hypergraph container lemma (which was the main result of~\cite{BalSam2020}).
We prove the optimal lower bound, confirming Alon's prediction.

\begin{theorem}
  \label{thm:points-in-the-plane}
  There is a constant $c>0$ such that, for every $\eps_0>0$, there are $\eps\in(0,\eps_0)$ and a finite set $P \subseteq \mathbb{R}^2$ such that every $\eps$-net for $\mathcal{L}(P)$ has at least $c/\eps \cdot \log(1/\varepsilon)$ points.
\end{theorem}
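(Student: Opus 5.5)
We will follow the strategy of Balogh and Solymosi, but replace the container argument with a sharper counting step. The point set will be a random subset $P$ of a grid, or equivalently of $[n]^d$ for a suitable dimension $d$, projected generically into the plane. The projection is chosen so that collinear triples in $\mathbb{R}^2$ correspond exactly to collinear triples in $[n]^d$. Lines through $[n]^d$ containing about $n$ points become ranges of size about $n$. With $|P| = N$, this gives $\eps \approx n p / N$, where $p$ is the retention probability.

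\textbf{Step 1 (reduction).} It suffices to show the following property of the hypergraph $\cH$ of long lines (those with at least $n/2$ points) in $[n]^d$. Every set $T\subseteq[n]^d$ that meets every line of $\cH$, which we call a \emph{blocking set}, has size at least $c\,\eps^{-1}\log(1/\eps)$ after restriction to a random subset $R$ of density $p$. Equivalently, we need that with high probability, every subset $S\subseteq R$ of size $s = c\,|R|/(pn)\cdot\log(pn)$ misses some line whose intersection with $R$ has size at least $pn/2$.

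\textbf{Step 2 (counting independent-like sets).} We bound the number of sets $S$ of size $s$ that are \emph{not} hit-free. Such an $S$ intersects every heavy line of $R$, so its complement $R\setminus S$ contains no heavy line. We use a union bound: sum over candidate complements $F$ (sets containing no large portion of a long line) of $\Pr[R\setminus S\subseteq F]$. The key estimate is a supersaturation statement. Any set $F\subseteq[n]^d$ of density at least $1-\delta$ fully contains many long lines; by density Hales--Jewett-type counting one would only get weak bounds. Instead, we exploit the fact that $[n]^d$ for large $d$ has about $n^{d}\cdot 3^d$ lines, with each point in about $3^d$ lines. A blocking set therefore needs density about $\log$-many multiples of $1/n$ in expectation, by a fractional-versus-integral gap argument in the spirit of the greedy/random set cover lower bound. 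Concretely, we will use containers for the hypergraph of lines (or a direct entropy argument) to show the following: the number of minimal blocking sets of size $s$ in $R$ is at most $\binom{|R|}{s}\cdot e^{-\Omega(s\log(pn))}$. Meanwhile, each fixed blocking set of $[n]^d$ of size $t$ survives restricted to $R$ with probability roughly matching this count.

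\textbf{Step 3 (choice of parameters).} Take $d\approx \log n$, so that lines are plentiful and nearly disjoint. Set $p$ so that $pn\to\infty$ slowly. Then verify that the expected number of small blocking sets in $R$ tends to $0$, and that $|R|$ and all heavy-line sizes concentrate via Chernoff bounds. This yields the theorem with $\eps = pn/(2|R|)$.

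\textbf{Main obstacle.} The crux is Step 2: obtaining a supersaturation or counting bound efficient enough to lose no power of $\log$. Previous container arguments lost exactly here, giving the exponents $1/3$ and $1/2$. We would first try to avoid generic containers altogether. The plan is to construct the small sets $S$ via an algorithmic encoding: repeatedly pick a line not yet hit and record which of its $O(pn)$ points lies in $S$. This gives a count of about $(pn)^{s}$ rather than $\binom{|R|}{s}$, and comparing this with the probability that all $\Theta(|R|/(pn))$ disjoint heavy lines are hit should produce the missing $\log(1/\eps)$ factor.
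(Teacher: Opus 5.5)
Your proposal has a genuine gap, and it sits in Step~1: there you silently replace $\log(1/\varepsilon)$ by $\log(pn)$, which is exactly the factor at stake. With $|R|\approx pn^d$ and $\varepsilon\approx pn/(2|R|)$ you have $\log(1/\varepsilon)\approx(d-1)\log n$. Your target $s=c|R|\log(pn)/(pn)$, however, is $\Theta(\varepsilon^{-1}\log(pn))$, and $\log(pn)\le\log n$. With $d\approx\log n$ as in Step~3, even a complete proof of Step~2 would give at most about $\varepsilon^{-1}\log n\approx\varepsilon^{-1}(\log(1/\varepsilon))^{1/2}$, which is no better than the bound of Balogh and Samotij.

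The long lines of $[n]^d$ cannot give more than this. A line with at least $n/2$ points of $[n]^d$ has primitive direction in $\{-2,\dots,2\}^d$, so every point lies on at most $5^d$ long lines. The long lines that are heavy in $R$ have fractional transversal number at most $1/\varepsilon$ (put weight $1/(\varepsilon|R|)$ on each point of $R$). Lov\'asz's greedy bound $\tau\le(1+\ln\Delta)\tau^*$ therefore blocks all of them with $O(d/\varepsilon)=O\bigl(\varepsilon^{-1}\log(1/\varepsilon)/\log n\bigr)$ points. So the first formulation in Step~1 is false for large $n$. The ``fractional-versus-integral gap'' you invoke is an upper-bound mechanism, and it is precisely what rules out your construction: a lower bound of order $\varepsilon^{-1}\log(1/\varepsilon)$ needs heavy ranges with $\ln\Delta=\Omega(\log(1/\varepsilon))$. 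Step~2 is in any case only asserted. The proposed encoding is worse than the trivial count: at your value of $s$, once $pn$ is large, $\binom{|R|}{s}\le\bigl(e\,pn/(c\log(pn))\bigr)^s<(pn)^s$.

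The missing idea is to make the ranges short. The paper takes $V=\br{n}^2$ with $n=rM$ and $M$ prime. It uses the hypergraph $\mathcal{H}$ of suitable lines of slopes $M/h$, $1\le h\le M/2$, each meeting $V$ in exactly $r=2\lfloor\gamma\log M\rfloor=\Theta(\log N)$ points. Then $\varepsilon=r/(4N)$ gives $\varepsilon^{-1}\log(1/\varepsilon)=\Theta(N)$, while $\ln\Delta(\mathcal{H})\approx\log M$ is of order $\log(1/\varepsilon)$. It therefore suffices to show that, with positive probability, every $\varepsilon$-net of $\mathcal{H}(V_p)$ with $p=1/2$ has linear size $\alpha pN$. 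This needs neither containers nor encodings:
\begin{itemize}
\item With probability at least $1/50$, $\mathcal{H}-\bR$ is nonempty and $12K$-almost-regular. This uses Chernoff for the degrees, which are binomial by linearity, together with $(1-p)^r\Delta(\mathcal{H})\ge C\log N$, and Paley--Zygmund for the number of surviving edges.
\item This property is inherited by every $S\subseteq\bR$, since $\mathcal{H}-S\supseteq\mathcal{H}-\bR$.
\item For such $S$, apply Bernstein's inequality to $\sum_{A\in\mathcal{G}}|A\cap\bR|$ for an almost-regular $\mathcal{G}\subseteq\mathcal{H}-S$. It bounds by $\exp(-pN/(10D))$, with $D=12K$, the probability that $\bR$ meets every edge of $\mathcal{H}-S$ in fewer than $pr/2$ points. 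This must happen if $S\subseteq\bR$ is a net and $|\bR|\le 2pN$.
\item That event is independent of $\{S\subseteq\bR\}$. A plain union bound over $|S|\le\alpha pN$, costing at most $(e/\alpha)^{2\alpha pN}$, then finishes the proof.
\end{itemize}
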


In fact, our main result is a lower bound on the smallest size of an $\varepsilon$-net that applies to a~large class of linear\footnote{Recall that a hypergraph is called linear if any two of its distinct edges intersect in at most one vertex, which also implies that its VC-dimension cannot exceed two.}, uniform hypergraphs.
For a hypergraph $\mathcal{H}$ on a finite vertex set $V$ and a~set $R \subseteq V$, write $\mathcal{H}(R) \coloneqq \{A \cap R : A \in \mathcal{H}\}$, cf.~the definition of $\mathcal{L}(P)$ presented above.
Given a subhypergraph $\mathcal{G} \subseteq \mathcal{H}$, which we shall always treat as spanning, we denote its average (vertex) degree by $d(\mathcal{G})$ and its maximum (vertex) degree by $\Delta(\mathcal{G})$.
Further, $\mathcal{G} \subseteq \mathcal{H}$ will be called \emph{$K$-almost-regular}, for some $K \ge 1$, if $\Delta(\mathcal{G}) \le K \cdot d(\mathcal{G})$.
Finally, given a $p \in (0,1]$, we denote by $V_p$ the random subset of $V$ obtained by independently retaining each element of $V$ with probability~$p$.
The following theorem is the main result of this work.

\begin{theorem}
  \label{thm:main}
  There is an absolute constant $C$ such that, for every $K \ge 1$, there exists an $\alpha = \alpha(K) > 0$ with the following property.
  Let $\mathcal{H}$ be a nonempty, $r$-uniform, $K$-almost-regular, linear hypergraph with vertex set $V$, let $\varepsilon \coloneqq r/(4|V|)$, and suppose that $p \in (0,2/3]$ satisfies
  \[
    (1-p)^r \cdot \Delta(\mathcal{H}) \ge C \log|V|
    \qquad
    \text{and}
    \qquad
    p |V| \ge CK.
  \]
  Then, with positive probability, every $\varepsilon$-net for $\mathcal{H}(V_p)$ has cardinality at least $\alpha p |V|$.
\end{theorem}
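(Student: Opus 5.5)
The plan is a first-moment (union-bound) argument over candidate small nets, with the entropy of the union bound controlled by a container/fingerprint encoding. Set $s \coloneqq \alpha p|V|$ and $m \coloneqq \lceil pr/2\rceil$. First I record the easy concentration facts. Since $p|V| \ge CK$ and $|V| \ge r$, Chernoff gives $|V_p| \le 2p|V|$ with probability at least $2/3$. On that event, $\varepsilon|V_p| \le pr/2 \le m$, so any trace $A \cap V_p$ of size at least $m$ must be hit by every $\varepsilon$-net. It therefore suffices to show the following. With probability larger than $1/3$, there is no $S \subseteq V_p$ with $|S| < s$ such that every $A \in \mathcal{H}$ with $A \cap S = \emptyset$ satisfies $|A \cap V_p| < m$. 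Call such an $S$ a \emph{bad} set.

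A plain union bound over $S$ fails. The number of choices of $S$ times $\Pr(S \subseteq V_p)$ is about $(e/\alpha)^s$, so the evidence that $S$ is bad must contribute a factor $\exp(-\Omega(s\log(1/\alpha)))$. I will get this from vertex degrees. By linearity, $S$ meets at most $s\Delta(\mathcal{H})$ edges. By $K$-almost-regularity, $|\mathcal{H}| = d(\mathcal{H})|V|/r \ge \Delta(\mathcal{H})|V|/(Kr)$, so all but a tiny fraction of edges avoid $S$ once $\alpha \ll 1/K$. The relevant witnesses are vertex-local. For $v \in V_p$, call an edge $A \ni v$ \emph{private for $v$} if $A \cap V_p = \{v\}$. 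The expected number of private edges at $v$ is about $(1-p)^{r-1}\deg(v)$. The hypothesis $(1-p)^r\Delta(\mathcal{H}) \ge C\log|V|$, together with almost-regularity applied to a positive fraction of vertices, makes this at least $C\log|V|/K$. So by Chernoff and a union bound over $V$, with high probability every typical vertex of $V_p$ has many private edges. Edges private for $v$ have traces of size $1 < m$, so they impose no constraint. This means the counting must use the complementary edges, those with large traces, each of which $S$ must meet.

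Concretely, I would encode a bad $S$ by a short fingerprint $F \subseteq S$ and a container $C(F) \subseteq V$ with $|C(F)| \le \beta|V|$, where $\beta = \beta(\alpha,K) \to 0$, and $S \subseteq C(F)$. The construction runs greedily, in the spirit of the efficient container lemma of Balogh and the second author. Repeatedly pick the vertex of $S$ of maximum degree in the current hypergraph of edges with large trace that are not yet hit by $F$; add it to $F$; and delete everything it covers. Because $\mathcal{H}$ is linear and $r$-uniform, each step removes at most one edge through any other vertex. Because $S$ must hit all roughly $|\mathcal{H}|$ large-trace edges while having only $s$ vertices, the average degree inside $S$ is about $|\mathcal{H}|/s \gg \Delta(\mathcal{H})$ unless the edges concentrate. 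Hence after $|F| = O(s/\log(1/\alpha))$ steps, the remaining vertices of $S$ are confined to the set of vertices of still-large degree, and that set has size at most $\beta|V|$. The union bound then becomes
\[
  \sum_{F}\Pr\bigl(F \subseteq V_p,\ |V_p \cap C(F)| \ge s - |F|\bigr) \le \binom{|V|}{|F|}p^{|F|}\cdot \exp\bigl(-\Omega(s\log(1/\beta))\bigr),
\]
which is at most $1/3$ for $\alpha$ small in terms of $K$.

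The main obstacle is making the container step respect the randomness. Which edges have large trace depends on $V_p$ itself, so the degree counts driving the greedy procedure are random and correlated with the event being bounded. I would try to first expose $V_p$ on a sprinkled subset: split $p$ as $p_1 + p_2$ with $p_2 = p/10$. The container is built from the deterministic hypergraph $\mathcal{H}$, where "large" is measured by $|A \cap V_{p_1}|$. The fresh round $V_{p_2}$ then supplies the probability bound $\Pr(|V_{p_2} \cap C(F)| \ge s/2) \le \binom{\beta|V|}{s/2}p^{s/2}$. Linearity is also what I expect to give the needed concentration of $|A \cap V_{p_1}|$ simultaneously for most edges.
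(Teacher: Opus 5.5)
There is a genuine gap. The container step carries your whole argument, but it rests on a miscount, and it would not close even if the containers existed.

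First, the miscount drops a factor of $r$. Since $|\mathcal{H}| = d(\mathcal{H})|V|/r \ge \Delta(\mathcal{H})|V|/(Kr)$, a set of $s=\alpha p|V|$ vertices meets at most a fraction $s\Delta(\mathcal{H})/|\mathcal{H}|\le \alpha pKr$ of the edges, not $O(\alpha K)$. Likewise $|\mathcal{H}|/s\le \Delta(\mathcal{H})/(\alpha p r)$. So once $r\gtrsim 1/(\alpha pK)$, two of your claims fail:
\begin{itemize}
\item it is false that almost all edges avoid $S$;
\item it is false that the average degree inside $S$ is $\gg\Delta(\mathcal{H})$. No vertex has degree above $\Delta(\mathcal{H})$, so that claim would rule out bad sets outright.
\end{itemize}
This is the only interesting regime, since \cref{thm:points-in-the-plane} needs $r\asymp\log|V|$. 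Indeed, in the grid construction of \cref{sec:almost-regular-construction}, the $|V|/(2r)$ points $\br{n/2}\times\br{M}$ meet every edge. With all degrees within a factor $K$ of each other, nothing in your greedy procedure confines a bad $S$ to a set of ``still-large-degree'' vertices of size $\beta|V|$.

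Second, even granting such containers, the arithmetic fails for constant $p$. Since $C(F)\supseteq S$, you need $\beta\ge|S|/|V|$, which can be as large as $\alpha p$. Then $\binom{\beta|V|}{s/2}p^{s/2}\le(2e\beta/\alpha)^{s/2}=(2ep)^{s/2}$, which is not even below $1$ for $p\ge 1/(2e)$; you dropped the factor $1/\alpha$ when writing $\exp(-\Omega(s\log(1/\beta)))$. Meanwhile the fingerprint factor $\binom{|V|}{|F|}p^{|F|}$ with $|F|=s/\log(1/\alpha)$ is already at least $e^{s}$.

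Third, the sprinkling does not decouple anything. Badness is defined through traces on all of $V_p$, and nothing forces half of $S$ into the fresh round $V_{p_2}$.

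The idea you are missing is that the plain union bound over $S$ does work, because the evidence against a fixed $S$ is far stronger than $\exp(-\Omega(s\log(1/\alpha)))$. Suppose $\mathcal{H}-S$, the set of edges disjoint from $S$, contains a nonempty $D$-almost-regular $\mathcal{G}$. Consider the event that $|A\cap V_p|<pr/2$ for all $A\in\mathcal{G}$. On it, $X=\sum_{A\in\mathcal{G}}|A\cap V_p|=\sum_v\deg_{\mathcal{G}}(v)\1_{v\in V_p}$ is at most half its mean $p\,d(\mathcal{G})|V|\ge p\,\Delta(\mathcal{G})|V|/D$. Bernstein's inequality, with summands in $[0,\Delta(\mathcal{G})]$, bounds this event by $\exp(-p|V|/(10D))=\exp(-s/(10D\alpha))$. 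The event is independent of $\{S\subseteq V_p\}$, and the bound beats the entropy $(e/\alpha)^{2s}$ once $\alpha\log(e/\alpha)\ll 1/D$.

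The only obstruction is that $\mathcal{H}-S$ need not be almost-regular for every small $S$; the strip above is an example. The fix uses the edges with \emph{empty} trace, not your private edges. With probability at least $1/50$, $\mathcal{H}-V_p$ is nonempty and $12K$-almost-regular:
\begin{itemize}
\item its maximum degree is controlled by Chernoff and a union bound, which is exactly where $(1-p)^r\Delta(\mathcal{H})\ge C\log|V|$ is used;
\item its number of edges is controlled by Paley--Zygmund, with the second moment handled by linearity.
\end{itemize}
Since $\mathcal{H}-S\supseteq\mathcal{H}-V_p$ whenever $S\subseteq V_p$, on that event every candidate net has this property with $D=12K$. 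So the union bound need only range over such $S$.
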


To deduce \cref{thm:points-in-the-plane}, it suffices to construct arbitrarily large sets $V \subseteq \mathbb{R}^2$ for which $\mathcal{L}(V)$ contains a nonempty, almost-regular subhypergraph of uniformity $\Theta(\log |V|)$ that satisfies the assumptions of \cref{thm:main} with $p = \Theta(1)$.
We show that, for infinitely many $n$, the grid $V \coloneqq \br{n}^2$ has this property.\footnote{Throughout this note, we use the shorthand notation $\br{n} \coloneqq \{1, \dotsc, n\}$.}
While this is quite straightforward to check, we postpone the details to the end of this note.

\subsection*{Organisation}

The remainder of this note is organised as follows:
In \cref{sec:concentration}, we recall two well-known concentration inequalities needed in the proof of \cref{thm:main}, which is then presented in \cref{sec:proof-main-theorem}.
The short \cref{sec:almost-regular-construction} details how \cref{thm:points-in-the-plane} can be derived from \cref{thm:main}.

\subsection*{Statement on the use of AI}

An earlier version of our argument used the hypergraph container lemma from~\cite{CamSam2026} and gave a lower bound of order $1/\varepsilon \cdot \log(1/\varepsilon) \cdot \bigl(\log\log(1/\varepsilon)\bigr)^{-1}$.
During the preparation of this manuscript, we asked OpenAI's ChatGPT for comments on a preliminary draft.
It suggested replacing the container argument with an averaging argument whose streamlined version is the current proof of \cref{lemma:good-net-unlikely}.

\subsection*{Acknowledgement}

The second author would like to thank Noga Alon for several inspiring conversations on this topic.

\section{Probabilistic estimates}
\label{sec:concentration}

The proof of~\cref{thm:main} requires two well-known concentration inequalities.
First, we shall use the following standard upper-tail estimate for binomial distributions.

\begin{lemma}
  \label{lemma:binomial-upper-tail}
  For every integer $N \ge 1$ and all $q \in [0,1]$,
  \[
    \Pr\bigl(\Bin(N,q)\ge 2qN\bigr) \le \exp(-qN/3).
  \]
\end{lemma}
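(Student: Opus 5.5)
We prove \cref{lemma:binomial-upper-tail} with the exponential-moment (Chernoff) method. The degenerate cases go first. If $q=0$, then $\Bin(N,0)=0\ge 0$ holds surely, and the bound reads $1\le\exp(0)=1$, which is true. If $q>1/2$, then $2qN>N$, so the event is empty and the bound is trivial. We may therefore assume $q\in(0,1/2]$, writing $X\sim\Bin(N,q)$ and $\mu\coloneqq qN$.

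For any $\lambda>0$, Markov's inequality applied to $e^{\lambda X}$, together with independence of the $N$ Bernoulli summands, gives
\[
  \Pr(X\ge 2\mu)\le e^{-2\lambda\mu}\,\Ex e^{\lambda X}=e^{-2\lambda\mu}\bigl(1+q(e^\lambda-1)\bigr)^N\le \exp\bigl(\mu(e^\lambda-1)-2\lambda\mu\bigr),
\]
where the last step uses $1+x\le e^x$. We choose $\lambda=\log 2$, which minimises the exponent. This yields
\[
  \Pr(X\ge 2\mu)\le\exp\bigl(-\mu(2\log 2-1)\bigr).
\]

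It remains to compare constants. Since $\log 2 > 0.693$, we get $2\log 2-1>0.386>1/3$. Hence $\Pr(X\ge 2\mu)\le\exp(-qN/3)$, as claimed.

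No step is a genuine obstacle. The only point needing care is the numerical check $2\log 2-1\ge 1/3$, which follows from $\log 2\ge 2/3$. That inequality is equivalent to $e^{2/3}\le 2$, i.e.\ $e^2\le 8$, which holds because $e^2<7.39$.
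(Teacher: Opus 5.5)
Your proof is correct. The paper gives no proof and simply quotes this as the standard binomial upper-tail (Chernoff) estimate, and your exponential-moment argument with $\lambda=\log 2$, yielding the exponent $2\log 2-1>1/3$, is exactly that standard derivation. Your preliminary case split on $q=0$ and $q>1/2$ is harmless but unnecessary, since the same computation holds for every $q\in[0,1]$.
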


We shall also need the following straightforward corollary of Bernstein's inequality for sums of independent bounded random variables (see, e.g., \cite[Theorem~2.9.5]{Ver2026}).

\begin{lemma}
  \label{lemma:bernstein-lower-tail}
  Let $M > 0$ and let $X_1,\dotsc,X_N$ be independent random variables taking values in $[0,M]$.
  Writing $X\coloneqq\sum_{i=1}^N X_i$ and $\mu\coloneqq\Ex[X]$, we have
  \[
    \Pr(X\le\mu/2)
    \le \exp\left(-\frac{\mu}{10M}\right).
  \]
\end{lemma}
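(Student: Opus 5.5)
We plan to derive this directly from the classical form of Bernstein's inequality, applied to the centred variables $Y_i \coloneqq \Ex[X_i]-X_i$. These are independent and have mean zero. Since $X_i \in [0,M]$, we also have $|Y_i|\le M$. Bernstein's inequality (e.g.\ \cite[Theorem~2.9.5]{Ver2026}, in its version with explicit constants) then gives, for every $t>0$,
\[
  \Pr\Bigl(\sum_{i=1}^N Y_i \ge t\Bigr) \le \exp\left(-\frac{t^2/2}{\sigma^2 + Mt/3}\right),
  \qquad \text{where } \sigma^2 \coloneqq \sum_{i=1}^N \Ex[Y_i^2].
\]
The event $X \le \mu/2$ is precisely the event $\sum_i Y_i \ge \mu/2$, so we will take $t \coloneqq \mu/2$.

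The key step is to bound the variance in terms of the mean, using that the $X_i$ are nonnegative and bounded by $M$. Because $0 \le X_i \le M$, we have $X_i^2 \le M X_i$. Therefore
\[
  \Ex[Y_i^2] = \operatorname{Var}(X_i) \le \Ex[X_i^2] \le M\,\Ex[X_i],
\]
and summing over $i$ gives $\sigma^2 \le M\mu$. Substituting this and $t=\mu/2$ into the exponent yields
\[
  \frac{\mu^2/8}{M\mu + M\mu/6} = \frac{3\mu}{28M} \ge \frac{\mu}{10M},
\]
which is exactly the claimed bound. The case $\mu=0$ is trivial, since then the right-hand side equals $1$.

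The only real obstacle is bookkeeping of constants: the cited reference may state Bernstein's inequality with unspecified absolute constants, or with a denominator of the form $\sigma^2 + Mt$. Our margin is slim, since $3/28 \approx 0.107$ while $1/10 = 0.1$. If the cited form is too lossy, we will instead give the short self-contained exponential-moment argument. For $\lambda>0$, convexity of $x\mapsto e^{-\lambda x}$ on $[0,M]$ gives
\[
  \Ex[e^{-\lambda X_i}] \le 1 - \frac{\Ex[X_i]}{M}\bigl(1-e^{-\lambda M}\bigr) \le \exp\Bigl(-\frac{\Ex[X_i]}{M}\bigl(1-e^{-\lambda M}\bigr)\Bigr).
\]
Combining this with Markov's inequality, we obtain
\[
  \Pr(X\le \mu/2) \le \exp\Bigl(\lambda\mu/2 - \frac{\mu}{M}\bigl(1-e^{-\lambda M}\bigr)\Bigr).
\]
Choosing $\lambda M = \log 2$ makes the exponent equal to $-\frac{\mu}{M}\bigl(\tfrac12 - \tfrac{\log 2}{2}\bigr) \approx -0.153\,\mu/M$, which is at most $-\mu/(10M)$. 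This route avoids any dependence on the precise constants of the cited theorem.
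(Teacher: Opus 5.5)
Your proposal is correct and follows the paper's route. The paper gives no proof beyond calling the lemma a ``straightforward corollary of Bernstein's inequality'', and your computation ($\sigma^2\le M\mu$ from $X_i^2\le MX_i$, $t=\mu/2$, exponent $3\mu/(28M)\ge\mu/(10M)$ via the one-sided Bernstein bound) is that derivation. Your convexity/Chernoff backup is also correct: $\lambda M=\log 2$ is the optimal choice there, giving exponent $-(1-\log 2)\mu/(2M)$. It is worth keeping, because the two-sided form of Bernstein, with its prefactor $2$, would not yield the stated bound when $\mu/M$ is small.
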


\section{Proof of the main theorem}
\label{sec:proof-main-theorem}

Let $\mathcal{H}$ be as in the statement of \cref{thm:main} and let $\bR \sim V_p$.
Write $N \coloneqq |V|$ and, for every $R \subseteq V$, let
\[
  \mathcal{H}-R \coloneqq \{A \in \mathcal{H} : A \cap R = \emptyset\};
\]
we shall regard $\mathcal{H} - R$ as a spanning subhypergraph of $\mathcal{H}$.
Our first key lemma states that the random subhypergraph $\mathcal{H}-\bR$ is likely to remain almost regular.

\begin{lemma}
  \label{lemma:almost-regular}
  With probability at least $1/50$, the hypergraph $\mathcal{H}-\bR$ is nonempty and $12K$-almost-regular.
\end{lemma}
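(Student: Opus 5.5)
The plan is to prove two things separately: an upper bound on $\Delta(\mathcal{H}-\bR)$ that holds with high probability, and a lower bound on $|\mathcal{H}-\bR|$ that holds with probability at least about $1/50$. Write $\mu \coloneqq (1-p)^r$ for the probability that a fixed edge avoids $\bR$. Then $\Ex|\mathcal{H}-\bR| = \mu|\mathcal{H}|$ and $\Ex d(\mathcal{H}-\bR) = \mu\, d(\mathcal{H})$. The target is the pair of bounds
\[
\Delta(\mathcal{H}-\bR)\le 6\mu\Delta(\mathcal{H}) \qquad\text{and}\qquad d(\mathcal{H}-\bR)\ge \mu d(\mathcal{H})/2 .
\]
Together with $\Delta(\mathcal{H})\le K d(\mathcal{H})$ they give $\Delta(\mathcal{H}-\bR)\le 12K\, d(\mathcal{H}-\bR)$. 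The second bound also gives nonemptiness, since $d(\mathcal{H})>0$.

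\emph{Maximum degree.} Fix $v\in V$. Then $\deg_{\mathcal{H}-\bR}(v)$ is at most the number of edges $A\ni v$ with $(A\setminus\{v\})\cap\bR=\emptyset$. By linearity, the sets $A\setminus\{v\}$ for $A\ni v$ are pairwise disjoint. So this count is a sum of at most $\Delta(\mathcal{H})$ independent indicators, each with probability $(1-p)^{r-1}\le 3\mu$, using $p\le 2/3$. By monotone coupling it is stochastically dominated by $\Bin(\Delta(\mathcal{H}),3\mu)$. \cref{lemma:binomial-upper-tail} then bounds the probability that it exceeds $6\mu\Delta(\mathcal{H})$ by $\exp(-\mu\Delta(\mathcal{H}))\le N^{-C}$, by the hypothesis $(1-p)^r\Delta(\mathcal{H})\ge C\log N$. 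A union bound over the $N$ vertices gives failure probability at most $N^{1-C}$. This is tiny, unless $N$ is very small, in which case $p N\ge CK$ forces $N$ to be large anyway.

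\emph{Number of surviving edges.} Let $X\coloneqq|\mathcal{H}-\bR|$; I would apply the Paley--Zygmund inequality to $X$. Two distinct edges either are disjoint, and then their avoidance events are independent, or meet in exactly one vertex. In the latter case both avoid $\bR$ with probability $(1-p)^{2r-1}$, so their covariance is $p(1-p)^{2r-1}\le 3p\mu^2$. The number of ordered intersecting pairs is at most $\sum_v \deg(v)^2\le \Delta(\mathcal{H})\, r|\mathcal{H}|$. This yields
\[
\Ex[X^2]\le (\Ex X)^2\Bigl(1+\frac{1}{\mu|\mathcal{H}|}+\frac{3p\,r\,\Delta(\mathcal{H})}{|\mathcal{H}|}\Bigr).
\]
Paley--Zygmund then gives $\Pr(X\ge \Ex X/2)\ge \tfrac14\,(\Ex X)^2/\Ex[X^2]$. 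Once the bracket is bounded by an absolute constant (about $12$), this probability exceeds $1/50$ with room to absorb the degree failure probability.

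\emph{Main obstacle.} The obstacle is bounding the last term. Using $r|\mathcal{H}|=N d(\mathcal{H})$ and almost-regularity gives $pr\Delta(\mathcal{H})/|\mathcal{H}|\le pKr^2/N$. To make this $O(1)$, I would combine two facts. Linearity gives $(r-1)\Delta(\mathcal{H})\le N$. The first hypothesis gives $e^{-pr}\ge\mu\ge C\log N/\Delta(\mathcal{H})$, so that $pr\le\log\Delta(\mathcal{H})$. If these do not suffice uniformly in $K$, my fallback is to avoid the second moment altogether. I would condition on $\bR$ vertex by vertex and lower-bound $\sum_v \deg_{\mathcal{H}-\bR}(v)$ by $\sum_v \1[v\notin\bR]\cdot Y_v$, where $Y_v$ counts the edges $A\ni v$ with $A\setminus\{v\}$ avoiding $\bR$. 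I would then apply \cref{lemma:bernstein-lower-tail} to an appropriate independent decomposition, with $M$ of order $\mu\Delta(\mathcal{H})$ supplied by the degree bound above. It is at this step that the constant $1/50$ and the factor $12$ would need to be tuned.
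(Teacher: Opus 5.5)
\paragraph{Gap in the second-moment step.} Your maximum-degree step is sound and is essentially the paper's. The paper works directly with $q\coloneqq(1-p)^{r-1}$ and the threshold $2q\Delta(\cH)\le 6(1-p)^r\Delta(\cH)$, which also avoids the fact that $\Bin\bigl(\Delta(\cH),3(1-p)^r\bigr)$ is undefined when $3(1-p)^r>1$.

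The genuine gap is the second-moment step, which you leave unresolved. The obstacle there comes from the lossy estimate $\sum_v\deg(v)^2\le\Delta(\cH)\cdot r|\cH|$. The resulting term $3pr\Delta(\cH)/|\cH|$ is \emph{not} bounded by an absolute constant under the hypotheses. Take a sunflower of $m$ edges of size $r$ through one common vertex. It is linear and $m$-almost-regular with $\Delta(\cH)=|\cH|=m$, so your term equals $3pr$. Here $pN\ge CK$ forces $pr\ge C-1$, while $(1-p)^r m\ge C\log N$ allows $pr$ up to about $\log m$.

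So no combination of $(r-1)\Delta(\cH)\le N$ with $pr\le\log\Delta(\cH)$ can close it; that route gives only $O\bigl(K\log\Delta(\cH)/\Delta(\cH)\bigr)$, which is not uniform in $K$. The Bernstein fallback is not an argument either. The summands $\1[v\notin\bR]\,Y_v$ are dependent, and the degree bound holds only with high probability, so it cannot serve as the almost-sure bound $M$.

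\paragraph{The missing observation.} You do not need to count intersecting pairs at all. By linearity, $|A\cup B|\ge 2r-1$ for all distinct $A,B\in\cH$. So each of the at most $|\cH|^2$ ordered pairs of distinct edges survives with probability at most $(1-p)^{2r-1}=(1-p)^{2r}/(1-p)$. Hence $\Ex[X^2]\le\Ex X+(\Ex X)^2/(1-p)$.

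In your notation: there are trivially at most $|\cH|^2$ ordered pairs of distinct intersecting edges, so the last term of your bracket can be replaced by $3p\le 2$.

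Since $\Ex X=(1-p)^r|\cH|\ge(1-p)^r\Delta(\cH)\ge C\log N\ge 1-p$, you get $\Ex[X^2]\le 2(\Ex X)^2/(1-p)$. Paley--Zygmund then gives $\Pr(X\ge\Ex X/2)\ge(1-p)/8\ge 1/24$. This easily absorbs the $N^{1-C}$ failure probability of the degree event, and your two target bounds finish the proof as you describe.

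This is exactly the paper's argument. Because $p\le 2/3$ and distinct edges share at most one vertex, $\Pr(A,B\text{ both survive})\le 3\Pr(A\text{ survives})\Pr(B\text{ survives})$ for every pair. So the degree sequence of $\cH$ plays no role in the second moment.
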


A set $S \subseteq V$ will be called \emph{$D$-good}, for some $D \ge 1$, if $\cH-S$ contains a nonempty $D$-almost-regular subhypergraph spanning $V$.
Note, crucially, that being $D$-good is a decreasing property, that is, any subset of a $D$-good set is $D$-good as well.
Our second key lemma states that, when $S \subseteq V$ is good, the random set $\bR$ is extremely unlikely to intersect every edges of $\mathcal{H} - S$ in fewer than $pr/2$ points.
Formally, for a subset $S \subseteq V$, define
\[
  \mathcal{M}_S \coloneqq \{R \subseteq V : |A \cap R| < pr/2 \text{ for every } A \in \mathcal{H} - S\}.
\]

\begin{lemma}
  \label{lemma:good-net-unlikely}
  For every $D \ge 1$ and every $D$-good $S \subseteq V$,
  \[
    \Pr(\bR \in \mathcal{M}_S) \le \exp\bigl(-pN / (10D)\bigr).
  \]
\end{lemma}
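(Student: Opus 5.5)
Let $\cG \subseteq \cH - S$ be a nonempty $D$-almost-regular subhypergraph spanning $V$, which exists because $S$ is $D$-good. Since $\cG \subseteq \cH - S$, every $R \in \mathcal{M}_S$ satisfies $|A \cap R| < pr/2$ for every $A \in \cG$. The plan is to average over the edges of $\cG$. Define
\[
  X \coloneqq \sum_{A \in \cG} |A \cap \bR| = \sum_{v \in V} \deg_\cG(v)\,\1[v \in \bR].
\]
On the event $\bR \in \mathcal{M}_S$ we have $X < |\cG| \cdot pr/2$. On the other hand, $\Ex[X] = p\sum_v \deg_\cG(v) = p r |\cG|$.

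So the event $\bR \in \mathcal{M}_S$ forces $X < \Ex[X]/2$. The summands $X_v \coloneqq \deg_\cG(v)\1[v\in\bR]$ are independent and take values in $[0, M]$ with $M \coloneqq \Delta(\cG)$. Therefore \cref{lemma:bernstein-lower-tail} gives
\[
  \Pr(\bR \in \mathcal{M}_S) \le \Pr\bigl(X \le \mu/2\bigr) \le \exp\Bigl(-\frac{p r |\cG|}{10\,\Delta(\cG)}\Bigr).
\]
It remains to bound the exponent. Since $\cG$ is spanning on $V$, we have $r|\cG| = \sum_v \deg_\cG(v) = N\, d(\cG)$, so the exponent equals $pN\, d(\cG)/(10\Delta(\cG))$. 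Almost-regularity gives $\Delta(\cG) \le D\, d(\cG)$, so the exponent is at least $pN/(10D)$, as claimed.

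The only points needing care are the following. We must check that $\cG$ is nonempty, so that $d(\cG)>0$ and $\mu>0$, which \cref{lemma:bernstein-lower-tail} requires via $M>0$. We must also note that the strict inequality $X<\mu/2$ is contained in the event $X\le \mu/2$. Neither is a real obstacle. The crucial idea is simply weighting vertices by their $\cG$-degrees, so that Bernstein's bound becomes $\mu/M = pN\,d(\cG)/\Delta(\cG)$. This is exactly where almost-regularity enters.
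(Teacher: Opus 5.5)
Your proposal is correct and follows the paper's proof essentially step for step. It uses the same degree-weighted sum $X=\sum_{A\in\cG}|A\cap\bR|$ and the same observation that $\bR\in\mathcal{M}_S$ forces $X<\mu/2$, then applies \cref{lemma:bernstein-lower-tail} with $M=\Delta(\cG)$ together with $\mu=p\,d(\cG)N\ge p\,\Delta(\cG)N/D$.
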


Let us first show how \cref{lemma:almost-regular,lemma:good-net-unlikely} imply the assertion of \cref{thm:main}.
Set $D \coloneqq 12K$, let $\alpha \in (0,1)$ be a constant satisfying $40 D \alpha \log(e/\alpha) \le 1$, let $\varepsilon \coloneqq r/(4N)$, and let $\mathcal{B}$ be the~family of all $R \subseteq V$ that contain an $\varepsilon$-net for $\mathcal{H}(R)$ with cardinality at most $\alpha p N$.
Observe that if $|R| \le 2pN$, then every $\varepsilon$-net $S \subseteq R$ for $\mathcal{H}(R)$ must intersect every $A \in \mathcal{H}$ that satisfies $|A \cap R| \ge pr/2 = 2 \varepsilon p N \ge \varepsilon |R|$, that is, $R \in \mathcal{M}_S$.
Therefore,
\begin{equation}
  \label{eq:key-upper-bound}
  \Pr(\bR \in \mathcal{B}) \le \Pr(|\bR| > 2pN) + \Pr(\bR \text{ is not $D$-good})
  + \sum_{\substack{S: \text{$D$-good} \\ |S| \le \alpha p N}} \Pr(S \subseteq \bR \text{ and } \bR \in \mathcal{M}_S).
\end{equation}

We now bound the three summands in the right-hand side of~\eqref{eq:key-upper-bound}.
By \cref{lemma:binomial-upper-tail}, the first summand is at most $\exp(- pN/3)$.
By \cref{lemma:almost-regular}, the second summand is at most $49/50$.
In order to bound the third summand (the sum), note first that, for every $S \subseteq V$, the events that $S \subseteq \bR$ and that $\bR \in \mathcal{M}_S$ are independent;
indeed, the latter event depends only on the intersection of $\bR$ with the edges of $\mathcal{H}-S$, which are all disjoint from $S$.
Since the sum ranges only over $D$-good sets, we may use \cref{lemma:good-net-unlikely} to bound it by
\[
  \Sigma \coloneqq \sum_{s \le \alpha p N} \binom{N}{s} \cdot p^s \cdot \exp\bigl(-pN / (10D)\bigr).
\]
Further, since
\[
  \sum_{s \le \alpha p N} \binom{N}{s} \cdot p^s \le \sum_{s \le \alpha p N} \left(\frac{e p N}{s}\right)^s \le (1+\alpha p N) \cdot (e/\alpha)^{\alpha p N} \le (e/\alpha)^{2\alpha p N} \le e^{pN/(20D)},
\]
thanks to our choice of $\alpha$, we may conclude that $\Sigma \le \exp(-pN/(20D))$.
Summarising,
\[
  \Pr(\bR \in \mathcal{B}) \le 49/50 + \exp(-pN/3) + \exp\bigl(-pN/(20D)\bigr).
\]
By our assumption that $pN \ge CK$ for a large constant $C$, we may conclude that $\Pr(\bR \in \mathcal{B}) < 1$, as desired.
It now remains to prove \Cref{lemma:almost-regular,lemma:good-net-unlikely}.

\begin{proof}[Proof of~\cref{lemma:almost-regular}]
  Fix an arbitrary $v \in V$ and condition on the event $v \notin \bR$.
  Since $\mathcal{H}$ is linear, the events $A \in \mathcal{H} - \bR$ are independent for all edges $A \in \mathcal{H}$ that contain $v$ and thus $\deg_{\cH-\bR} v \sim\Bin(\deg_{\cH}v,q)$, where $q \coloneqq (1-p)^{r-1}$.
  In particular, it follows from \cref{lemma:binomial-upper-tail} that
  \[
    \Pr\bigl(\deg_{\cH-\bR}v \ge 2q \cdot \Delta(\cH) \mid v \notin \bR\bigr) \le \exp\bigl(-q \cdot \Delta(\cH)/3\bigr).
  \]
  As $\deg_{\mathcal{H} - \bR} v = 0$ in case $v \in \bR$, the above upper-tail estimate holds also unconditionally.
  Thanks to our assumption that $q \cdot \Delta(\mathcal{H}) \ge C \log N$ for a large constant $C$, we may use the union bound over all $v \in V$ to conclude that $\Delta(\mathcal{H} - \bR) \le 2q \cdot \Delta(\mathcal{H})$ with probability at least $49/50$.
  
  Further, let $X \coloneqq |\mathcal{H}-\bR|$ and note that $\mu \coloneqq \Ex[X] = (1-p)^r \cdot |\mathcal{H}|$.\footnote{We shall identify hypergraphs with their sets of edges, so that $|\mathcal{H}|$ is the number of edges of $\mathcal{H}$.}
  Since the union of any two distinct edges of $\mathcal{H}$ has at least $2r-1$ vertices, by linearity, we have
  \[
    \Ex[X^2] = \sum_{A, B \in \mathcal{H}} \Pr\bigl((A \cup B) \cap \bR = \emptyset\bigr) = \sum_{A, B \in \mathcal{H}} (1-p)^{|A \cup B|} \le \mu + \frac{\mu^2}{1-p} \le \frac{2\mu^2}{1-p},
  \]
  where the last inequality holds as $\mu = (1-p)q \cdot |\mathcal{H}| \ge (1-p)q \cdot \Delta(\mathcal{H}) \ge 1-p$.
  Consequently, the Paley--Zygmund inequality gives
  \[
    \Pr(X\ge\mu/2) \ge \frac{\mu^2}{4\Ex[X^2]}\ge\frac{1-p}{8} \ge \frac{1}{24}.
  \]

  Finally, on the intersection of the two events
  \[
    \Delta(\mathcal{H} - \bR) \le 2q \cdot \Delta(\mathcal{H})
    \qquad
    \text{and}
    \qquad
    |\mathcal{H}-\bR| \ge \frac{\mu}{2} = \frac{(1-p)q \cdot |\mathcal{H}|}{2} \ge \frac{q|\mathcal{H}|}{6}
  \]
  which has probability at least $1/24 - 1/50$, we have
  \[
    \begin{split}
      \Delta(\mathcal{H}-\bR) \le 2q \cdot \Delta(\mathcal{H}) \le 2qK \cdot d(\mathcal{H}) = \frac{2qK \cdot r|\mathcal{H}|}{N} \le \frac{12K \cdot r|\mathcal{H}-\bR|}{N} = 12K \cdot d(\mathcal{H} - \bR),
    \end{split}
  \]
  as claimed.
\end{proof}

\begin{proof}[Proof of~\cref{lemma:good-net-unlikely}]
  Suppose that $S \subseteq V$ is $D$-good and let $\mathcal{G}$ be a nonempty $D$-almost-regular subhypergraph of $\mathcal{H} - S$.
  Define
  \[
    X \coloneqq \sum_{A\in\cG} |A \cap \bR| = \sum_{v \in V}\deg_{\cG}v \cdot \1_{v\in\bR}
  \]
  and note that $X$ is a sum of independent random variables, each taking values in
  $\{0, \dotsc, \Delta(\cG)\}$.
  Suppose now that $\bR \in \mathcal{M}_S$, that is, $|A \cap \bR| < pr/2$ for every $A \in \mathcal{H} - S \supseteq \mathcal{G}$.
  On this event, we clearly have $X < |\mathcal{G}| \cdot pr/2$ while
  \[
    \mu \coloneqq \Ex[X] = p \cdot r |\mathcal{G}| = p \cdot d(\mathcal{G}) \cdot N \ge \frac{p \cdot \Delta(\mathcal{G}) \cdot N}{D}
  \]
  and thus, by \cref{lemma:bernstein-lower-tail},
  \[
    \Pr(X \le \mu/2) \le \exp\left(-\frac{\mu}{10 \cdot \Delta(\cG)}\right) \le \exp\left(-\frac{pN}{10D}\right),
  \]
  as claimed.
\end{proof}

\section{Lower bound for epsilon nets for lines}
\label{sec:almost-regular-construction}

Fix some $\gamma \in (0,1/3)$ and let $M$ be a large prime.
Set
\[
  r\coloneqq 2 \lfloor\gamma \log M\rfloor,
  \quad
  n\coloneqq rM,
  \quad
  V\coloneqq\br{n}^2,
  \quad
  N \coloneqq n^2,
  \quad
  \text{and}
  \quad
  H\coloneqq\lfloor M/2 \rfloor
\]
and note that $n$ is even and $(\gamma / 2) \cdot \log N \le r \le \gamma \log N$, provided that $M$ is sufficiently large.
Given $(x,y) \in V$ and $h \in \br{H}$, denote by
\[
  \ell(x,y;h)
  \coloneqq
  \bigl\{(x,y)+t \cdot (h,M):t\in\mathbb{R}\bigr\}
\]
the line with slope $M/h$ that passes through $(x,y)$.
Finally, for $h \in \br{H}$, define
\[
  \mathcal{H}_h
  \coloneqq
  \bigl\{\ell(x,y;h) \cap V:
    x \in\br{n/2} \land y\in\br{M}\bigr\}
\]
and let $\mathcal{H} \coloneqq \mathcal{H}_1 \cup \dotsb \cup \mathcal{H}_H$, viewed as a hypergraph on $V$.

\begin{lemma}
  \label{lemma:hyp-lines-almost-regular}
  The hypergraph $\mathcal{H}$ is $r$-uniform, $2$-almost-regular, and satisfies $d(\mathcal{H}) \ge N^{1/3}$.
\end{lemma}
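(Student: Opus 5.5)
We verify the three claims directly from the definition of the lines $\ell(x,y;h)$.

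\emph{Uniformity.} Fix $h\in\br{H}$, $x\in\br{n/2}$ and $y\in\br{M}$. A lattice point of $\ell(x,y;h)$ has the form $(x+th,y+tM)$ with $tM\in\mathbb{Z}$ and $th\in\mathbb{Z}$. Since $M$ is prime and $1\le h\le M/2<M$, we have $\gcd(h,M)=1$, so $t$ must be an integer. The condition $1\le y+tM\le n=rM$ with $y\in\br{M}$ holds exactly for $t\in\{0,1,\dotsc,r-1\}$. For these $t$ we have $x+th\le n/2+(r-1)H< n/2+rM/2=n$, so the first coordinate also lies in $\br{n}$. Hence every edge has exactly $r$ points. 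Distinct pairs $(x,y)$ with the same $h$ give distinct lines, because the starting point is the unique point of the line with second coordinate in $\br{M}$. Lines with different $h$ have different slopes. Therefore $|\mathcal{H}|=H\cdot(n/2)\cdot M$, and the hypergraph is linear.

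\emph{Degrees.} For each $h$, a point $(a,b)\in V$ lies on at most one line of $\mathcal{H}_h$. Namely, it lies on the one with $t$ determined by $b=y+tM$, $y\in\br{M}$, and $x=a-th$, provided that $x\in\br{n/2}$. Hence $\Delta(\mathcal{H})\le H$. The average degree is
\[
d(\mathcal{H})=\frac{r|\mathcal{H}|}{N}=\frac{rHnM/2}{n^2}=\frac{H}{2},
\]
using $n=rM$. This gives $\Delta\le H=2d$, so $\mathcal{H}$ is $2$-almost-regular.

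\emph{The degree bound.} We have $d(\mathcal{H})=H/2\approx M/4$ and $N=r^2M^2\le(\gamma\log N)^2M^2$. Thus $N^{1/3}\le M^{2/3}(\log N)^{2/3}$, which is much smaller than $M/4-1$ once $M$ is large. The only mildly delicate points are the coprimality argument, which is where primality of $M$ is used, and checking the first coordinate stays in range, which is where the restriction $h\le M/2$ and $x\le n/2$ are used. Neither is a real obstacle.
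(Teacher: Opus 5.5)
Your proof is correct and follows essentially the same route as the paper. You use coprimality of $h$ and $M$ to identify exactly $r$ lattice points on each line, and the unique point with second coordinate in $\br{M}$ to get $|\mathcal{H}_h| = nM/2$; then $d(\mathcal{H}) = H/2$ against $\Delta(\mathcal{H}) \le H$ (one line per slope through each point), and finally $H/2 \approx M/4 \gg (rM)^{2/3} = N^{1/3}$, where the unstated facts $x+th \ge 1$ and $\log N = O(\log M)$ are immediate.
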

\begin{proof}
  Fix an arbitrary triple $x \in \br{n/2}$, $y \in \br{M}$, and $h \in \br{H}$.
  Since $M$ is prime, and thus $\gcd(h,M)=1$, the integer points of $\ell(x,y;h)$ are precisely the points $(x,y) + t \cdot (h,M)$ with $t \in \mathbb{Z}$.
  Further, since $1 \le y + t \cdot M \le rM = n$ if and only if $t \in \{0, \dotsc, r-1\}$ and
  \[
    1 \le x + t \cdot h \le n/2 + r \cdot H \le n
  \]
  for each such $t$, the line $\ell(x,y;h)$ intersects $V$ in precisely $r$ points;
  in other words, $\mathcal{H}$ is $r$-uniform.
  Further, since the second coordinate of only one of those points belongs to $\br{M}$, we have $|\mathcal{H}_h| = n/2 \cdot M$.
  Since the hypergraphs $\mathcal{H}_1, \dotsc, \mathcal{H}_H$ are pairwise disjoint, we have
  \[
    d(\mathcal{H}) = \frac{r \cdot |\mathcal{H}|}{N} = \frac{r \cdot H \cdot n/2 \cdot M}{n^2} = \frac{H}{2},
  \]
  while $\Delta(\mathcal{H}) \le H$, as every point of $V$ lies on at most one line of every given slope;
  this implies that $\mathcal{H}$ is $2$-almost-regular.
  Finally, since $r \le \gamma \log N$, we have $d(\mathcal{H}) \ge M/5 \ge n^{2/3} = N^{1/3}$ for sufficiently large $M$.
\end{proof}

We are now ready to deduce \cref{thm:points-in-the-plane}.
We first verify that $\mathcal{H}$ satisfies the assumptions of \cref{thm:main} with $K \coloneqq 2$ and $p \coloneqq 1/2$.
Indeed, $p |V| \ge M^2 / 2 \ge 2C$ and
\[
  (1-p)^r \cdot \Delta(\mathcal{H}) \ge (1-p)^r \cdot d(\mathcal{H}) \ge 2^{-\gamma \log N} \cdot N^{1/3} \ge N^{1/3-\gamma} \ge C \log N
\]
when $M$, and hence also $N$, is sufficiently large.
Now, let $\alpha = \alpha(2)$ be the constant supplied by~\cref{thm:main}, let $\varepsilon \coloneqq r/(4N)$, and note that $\varepsilon \to 0$ as $M \to \infty$.
The theorem yields some $P \subseteq V \subseteq \mathbb{R}^2$ such that every $\varepsilon$-net for $\mathcal{H}(P)$, and thus also every $\varepsilon$-net for $\mathcal{L}(P) \supseteq \mathcal{H}(P)$, has at least $\alpha N /2$ points.
Since
\[
  \frac{1}{\varepsilon} \log \frac{1}{\varepsilon} = \frac{4N}{r} \log \frac{4N}{r} \le \frac{4N \log N}{r} \le \frac{8N}{\gamma}
\]
for sufficiently large $M$, we have $\alpha N / 2 \ge (\alpha \gamma / 16) \cdot 1/\varepsilon \cdot \log(1/\varepsilon)$, as desired.

\bibliographystyle{abbrv}
\bibliography{eps-nets}

@article{Alo2012,
  author  = {Alon, Noga},
  title   = {A Non-linear Lower Bound for Planar {$\varepsilon$}-Nets},
  journal = {Discrete \& Computational Geometry},
  volume  = {47},
  number  = {2},
  pages   = {235--244},
  year    = {2012},
  doi     = {10.1007/s00454-010-9323-7}
}

@article{AroEzrSha2010,
  author  = {Aronov, Boris and Ezra, Esther and Sharir, Micha},
  title   = {Small-Size {$\varepsilon$}-Nets for Axis-Parallel Rectangles and Boxes},
  journal = {SIAM Journal on Computing},
  volume  = {39},
  number  = {7},
  pages   = {3248--3282},
  year    = {2010},
  doi     = {10.1137/090762968}
}

@article{BalMorSam15,
  author  = {Balogh, J{\'o}zsef and Morris, Robert and Samotij, Wojciech},
  title   = {Independent Sets in Hypergraphs},
  journal = {Journal of the American Mathematical Society},
  volume  = {28},
  number  = {3},
  pages   = {669--709},
  year    = {2015},
  doi     = {10.1090/S0894-0347-2014-00816-X}
}

@article{BalSam2020,
  author  = {Balogh, J{\'o}zsef and Samotij, Wojciech},
  title   = {An Efficient Container Lemma},
  journal = {Discrete Analysis},
  volume  = {2020},
  pages   = {Paper No. 17, 56 pp.},
  year    = {2020},
  doi     = {10.19086/da.17354}
}

@article{BalSol2018,
  author  = {Balogh, J{\'o}zsef and Solymosi, J{\'o}zsef},
  title   = {On the Number of Points in General Position in the Plane},
  journal = {Discrete Analysis},
  volume  = {2018},
  pages   = {Paper No. 16, 20 pp.},
  year    = {2018},
  doi     = {10.19086/da.4438}
}

@article{CheVap1971,
  author  = {Vapnik, Vladimir N. and Chervonenkis, Alexey Ya.},
  title   = {On the Uniform Convergence of Relative Frequencies of Events to Their Probabilities},
  journal = {Theory of Probability and Its Applications},
  volume  = {16},
  number  = {2},
  pages   = {264--280},
  year    = {1971},
  doi     = {10.1137/1116025}
}

@article{FurKat1991,
  author  = {Furstenberg, Hillel and Katznelson, Yitzhak},
  title   = {A Density Version of the {Hales--Jewett} Theorem},
  journal = {Journal d'Analyse Math{\'e}matique},
  volume  = {57},
  number  = {1},
  pages   = {64--119},
  year    = {1991},
  doi     = {10.1007/BF03041066}
}

@article{HauWel1987,
  author  = {Haussler, David and Welzl, Emo},
  title   = {{$\varepsilon$}-Nets and Simplex Range Queries},
  journal = {Discrete \& Computational Geometry},
  volume  = {2},
  number  = {2},
  pages   = {127--151},
  year    = {1987},
  doi     = {10.1007/BF02187876}
}

@article{KomPacWoe1992,
  author  = {Koml{\'o}s, J{\'a}nos and Pach, J{\'a}nos and Woeginger, Gerhard J.},
  title   = {Almost Tight Bounds for {$\varepsilon$}-Nets},
  journal = {Discrete \& Computational Geometry},
  volume  = {7},
  number  = {1},
  pages   = {163--173},
  year    = {1992},
  doi     = {10.1007/BF02187833}
}

@inproceedings{MatSeiWel1990,
  author    = {Matou{\v{s}}ek, Ji{\v{r}}{\'i} and Seidel, Raimund and Welzl, Emo},
  title     = {How to Net a Lot with Little: Small {$\varepsilon$}-Nets for Disks and Halfspaces},
  booktitle = {Proceedings of the Sixth Annual Symposium on Computational Geometry},
  pages     = {16--22},
  publisher = {Association for Computing Machinery},
  address   = {New York},
  year      = {1990},
  doi       = {10.1145/98524.98530}
}

@inproceedings{PacWoe1990,
  author    = {Pach, J{\'a}nos and Woeginger, Gerhard J.},
  title     = {Some New Bounds for {$\varepsilon$}-Nets},
  booktitle = {Proceedings of the Sixth Annual Symposium on Computational Geometry},
  pages     = {10--15},
  publisher = {Association for Computing Machinery},
  address   = {New York},
  year      = {1990},
  doi       = {10.1145/98524.98529}
}

@article{PacTar2013,
  author  = {Pach, J{\'a}nos and Tardos, G{\'a}bor},
  title   = {Tight Lower Bounds for the Size of {$\varepsilon$}-Nets},
  journal = {Journal of the American Mathematical Society},
  volume  = {26},
  number  = {3},
  pages   = {645--658},
  year    = {2013},
  doi     = {10.1090/S0894-0347-2012-00759-0}
}

@article{SaxTho15,
  author  = {Saxton, David and Thomason, Andrew},
  title   = {Hypergraph Containers},
  journal = {Inventiones Mathematicae},
  volume  = {201},
  number  = {3},
  pages   = {925--992},
  year    = {2015},
  doi     = {10.1007/s00222-014-0562-8}
}

@book{Ver2026,
  author    = {Vershynin, Roman},
  title     = {High-Dimensional Probability: An Introduction with Applications in Data Science},
  edition   = {2nd},
  publisher = {Cambridge University Press},
  series    = {Cambridge Series in Statistical and Probabilistic Mathematics},
  volume    = {58},
  year      = {2026},
  isbn      = {9781009490641},
  doi       = {10.1017/9781009490672}
}

@article{CamSam2026,
  author  = {Campos, Marcelo and Samotij, Wojciech},
  title   = {Towards an Optimal Hypergraph Container Lemma},
  journal = {Combinatorica},
  volume  = {46},
  pages   = {Article 24, 31 pp.},
  year    = {2026},
  doi     = {10.1007/s00493-026-00214-1}
}

@inproceedings {BalMorSam18,
    AUTHOR = {Balogh, J\'ozsef and Morris, Robert and Samotij, Wojciech},
     TITLE = {The method of hypergraph containers},
 BOOKTITLE = {Proceedings of the {I}nternational {C}ongress of
              {M}athematicians---{R}io de {J}aneiro 2018. {V}ol. {IV}.
              {I}nvited lectures},
     PAGES = {3059--3092},
 PUBLISHER = {World Sci. Publ., Hackensack, NJ},
      YEAR = {2018},
      ISBN = {978-981-3272-93-4; 978-981-3272-87-3},
   MRCLASS = {05-02 (05C35 05C65 05D10)},
  MRNUMBER = {3966523},
}

\end{document}